\newtheorem{theorem}{Theorem}
\newtheorem{corollary}{Corollary}
\newtheorem{definition}{Definition}
\newtheorem{proposition}{Proposition}
\title{Arithmetic volumes of indefinite ternary lattices with square free discriminant}
\author{Ekaterina Stuken}
\begin{document}
\maketitle
\begin{abstract}
The main purpose of this paper is to give an explicit formula for the volumes of indefinite ternary lattices with square free discriminant. We also consider several examples of lattices, for which the fundamental domain is well-known and make sure the results obtained by our computation coincide with the known ones.
\end{abstract}

\section*{Introduction}

For the detailed description of everything mentioned here we refer to \cite{con}, \cite{gri}. 

Let (V,Q) be a quadratic space over $\mathbb{Q}$, dim(V)=$n\geq 3$ and (r,s) be the signature of $(\mathrm{V}\otimes \mathbb{R},\mathrm{Q})$. Also, O(r,s) denotes the orthogonal group $\mathrm{O}(\mathrm{V}\otimes \mathbb{R},\mathrm{Q})$ and $\mathrm{SO_0(r,s)}$ denotes the connected component of O(r,s). The quotient space
$$ X=\mathrm{O(r,s)/O(r)\times O(s)=SO_0(r,s)/SO(r)\times SO(s)}$$
is isomorphic to a non-compact symmetric space of dimension rs.

Consider the compact symmetric space $X^V=\mathrm{O(r+s)/O(r)\times O(s)}$ dual to $X$. It is well-known that $X$ embeds into the compact dual $X^V$ as an open domain (\cite{hel}).

Let $L$ be an integral lattice in (V, Q) (that is $(x,y)=\frac{1}{2}(Q(x+y)-Q(x)-Q(y))\in \mathbb{Z} \ \forall x,y\in L$). $\mathrm{O(L)}=\lbrace g\in \mathrm{O(r,s)} \ |\ gL=L \rbrace$ denotes an arithmetic discrete subgroup of $\mathrm{O(r,s)}$ and $\mathrm{O(L)\setminus O(r,s)}$ and $\mathrm{O(L)}\setminus X$ have finite volumes for the $\mathrm{O(r,s)}$-invariant volume forms. Let us choose the volume form $\mathrm {Vol()}$ on the domain $X$ and on the compact dual $X^V$ that coincide at the common point $\mathrm{O(r)} \times \mathrm{O(s)}$ of both domains.


Consider two invariants assigned to the lattice $L$.


\begin{definition}
The normalised Hirzebruch-Mumford volume of the lattice $L$ is $$\mathrm {Vol_{HM}(O}(L))=\frac{\mathrm {Vol(O}(L)\setminus X)}{\mathrm {Vol}(X^V)}$$
\end{definition}
It is the first invariant.

\begin{definition}
Let $S\in \mathrm {Mat_{n\times n}}(\mathbb{Q})$ be the Gram matrix of $L$, $p$ - prime number. The local density of $L$ is 
$$\alpha_p (S) = \frac{1}{2} \lim_{r \to \infty} p^{-\frac{rn(n-1)}{2}} \mid \{ X\in \mathrm {Mat_{n\times n}}(\mathbb{Z}_p) \  \mathrm{mod} \ p^r;\  X^t SX \equiv S \  \mathrm{mod} \ p^r\}\mid$$
\end{definition}
The second invariant is $\prod \limits_p \alpha_p(L)^{-1}$ where the product is taken over all primes $p$




%
Siegel found the connection between these two invariants.

\begin{theorem} [Siegel's formula] \cite{gri}, \cite{sie} 
Let $L$ be an indefinite lattice of rank $\rho$ $\geq$ 3, $g_{sp}$ -- the number of spinor genera in the genus of $L$. Then the normalised volume of $\mathrm O(L)$ is equal to $$\mathrm{
Vol_{HM}(O}(L))=\frac{2}{g_{sp}} \cdot \mid \mathrm {det}L \mid
^{(\rho+1)/2} \prod \limits_{k=1}^{\rho} \pi^{-k/2} \Gamma (k/2)
\prod \limits_p \alpha_p(L)^{-1}$$
\end{theorem}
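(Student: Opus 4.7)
The statement is classical, so my plan is to outline the adelic derivation that Siegel's identity for indefinite forms admits. The overall strategy is to compute the Tamagawa volume of $\mathrm{SO}(V)$ in two different ways and then compare.

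First, I would pass to the algebraic group $G=\mathrm{SO}(V)$ over $\mathbb{Q}$ and work with its adele group $G(\mathbb{A})$. A classical theorem (Weil, Kottwitz) states that the Tamagawa number $\tau(G)=\mathrm{Vol}(G(\mathbb{Q})\backslash G(\mathbb{A}))=2$ in the convergent Tamagawa measure for any non-degenerate quadratic space of dimension $\geq 3$. Fixing a lattice $L\subset V$, the open compact subgroup $K_f=\prod_p \mathrm{SO}(L_p)\subset G(\mathbb{A}_f)$ defines a genus, and the orbit structure of $G(\mathbb{A}_f)$ under $G(\mathbb{Q})$ and $K_f$ encodes classes in the genus.

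Second, I would decompose the Tamagawa measure as a product $\mu_\infty \cdot \prod_p \mu_p$ and integrate. For the indefinite case with $\rho\geq 3$, strong approximation for $\mathrm{Spin}(V)$ reduces the set of double cosets at finite level; what survives is parametrized by spinor genera, which explains the prefactor $2/g_{sp}$ (the $2$ coming from $\tau(\mathrm{SO})=2$ vs.\ the kernel of the spinor norm). This produces the identity
\begin{equation*}
\mathrm{Vol}(\mathrm{O}(L)\backslash G(\mathbb{R}))\;=\;\frac{2}{g_{sp}}\,\mathrm{Vol}_\infty(K_\infty)\prod_p \mathrm{Vol}_p(\mathrm{SO}(L_p))^{-1},
\end{equation*}
up to the discrepancy between Tamagawa and geometric normalizations, which contributes the discriminant factor $|\det L|^{(\rho+1)/2}$ via the local-to-global comparison of self-dual versus $L_p$-based measures.

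Third, I would identify the two sides of the product. At each finite prime, a stabilizer computation (going back to Siegel, see \cite{gri}) shows $\mathrm{Vol}_p(\mathrm{SO}(L_p))^{-1}=\alpha_p(L)^{-1}$ up to harmless powers of $p$ that cancel against the discriminant normalization. At the archimedean place, the Iwasawa-type decomposition of $G(\mathbb{R})$ with respect to its maximal compact $K_\infty=\mathrm{O}(r)\times \mathrm{O}(s)$, together with the requirement that volume forms on $X$ and $X^V$ agree at the basepoint, lets me replace $\mathrm{Vol}_\infty(K_\infty)$ by $\mathrm{Vol}(X^V)$, thereby converting the left-hand side into $\mathrm{Vol}_{HM}(\mathrm{O}(L))$. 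The archimedean density computation (via Gaussian integrals for a positive-definite quadratic form of the same rank) then evaluates $\mathrm{Vol}_\infty(K_\infty)$ to $\prod_{k=1}^\rho \pi^{-k/2}\Gamma(k/2)$, up to the same self-dual normalization absorbed into $|\det L|^{(\rho+1)/2}$.

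The main obstacle, and what deserves the most care, is tracking all normalizations: Tamagawa measure versus the geometric choice of $\mathrm{Vol}$ made in the introduction, the self-dual local measure versus the one based on $L_p$, and the precise way the factor $2/g_{sp}$ arises from strong approximation for $\mathrm{Spin}$ combined with $\tau(\mathrm{SO})=2$. Once bookkeeping is in order, the identity collapses to the stated form. Since the result is used here as a black box in the ternary square-free setting, I would follow \cite{sie} and \cite{gri} verbatim rather than reprove it.
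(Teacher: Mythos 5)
The paper offers no proof of this theorem: it is imported as a known result with citations to Siegel and to Gritsenko--Hulek--Sankaran, and is used downstream as a black box. Your adelic outline (Tamagawa number $\tau(\mathrm{SO})=2$, strong approximation for $\mathrm{Spin}$ producing the $2/g_{sp}$ factor, local densities $\alpha_p$ at finite places, Gaussian-integral evaluation of the archimedean factor, and the $|\det L|^{(\rho+1)/2}$ normalization discrepancy) is precisely the standard derivation carried out in the cited references, so it is consistent with what the paper relies on; deferring the measure-normalization bookkeeping to \cite{sie} and \cite{gri} is exactly what the paper itself does.
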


\begin{theorem} [Kneser] \cite{kne} \\
Let $L$ be an indefinite lattice. Suppose the genus of $L$ contains more than one class. Then there exists a prime $p$ such that the quadratic form of $L$
can be diagonalised over $\mathbb{Z}_p$, and all the diagonal entries consist of different powers of $p$.
\end{theorem}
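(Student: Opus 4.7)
The plan is to combine the strong approximation theorem for the spin group with Kneser's explicit local computation of spinor norms. Since $L$ is indefinite of rank $\geq 3$ (the ternary case being the one of interest here), strong approximation for $\mathrm{Spin}(V)$ implies that every spinor genus in the genus of $L$ contains exactly one proper class. Thus, the hypothesis that the genus of $L$ decomposes into more than one class is equivalent to the statement that the genus decomposes into more than one spinor genus.

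Next I would parametrise spinor genera adelically. The set of spinor genera in the genus of $L$ is in bijection with the finite double-coset space
$$\mathbb{Q}^{\times}\,\theta(\mathrm{O}(V_{\mathbb{R}}))\,\backslash\, J_{\mathbb{Q}}\,/\,\theta(\mathrm{O}_{\mathbb{A}_f}(L)),$$
where $J_{\mathbb{Q}}$ is the idele group and $\theta$ denotes the spinor norm. Since $L$ is indefinite, the archimedean factor $\theta(\mathrm{O}(V_{\mathbb{R}}))$ is already all of $\mathbb{R}^{\times}$, so the only way the double-coset space can be nontrivial is if at some finite prime $p$ the local spinor norm $\theta(\mathrm{O}(L_p))$ fails to contain the full unit group $\mathbb{Z}_p^{\times}$. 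Our task thus reduces to pinpointing when this can occur.

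At this step I would invoke Kneser's local analysis. Decompose $L_p$ into its Jordan constituents $L_p = \perp_i p^{a_i} M_i$ with $M_i$ unimodular. A direct calculation of spinor norms of symmetries $\tau_v$ for suitably chosen anisotropic vectors $v$ shows that $\theta(\mathrm{O}(L_p))$ contains $\mathbb{Z}_p^{\times}$ as soon as either (i) some $M_i$ has rank $\geq 2$, so that one may produce two symmetries in the same Jordan component whose ratio of norms exhausts $\mathbb{Z}_p^{\times}/(\mathbb{Z}_p^{\times})^2$, or (ii) two distinct $M_i$ share the same scale, which again yields enough reflections. Consequently the only lattices with $\theta(\mathrm{O}(L_p)) \not\supseteq \mathbb{Z}_p^{\times}$ are those whose Jordan decomposition at $p$ has all rank-$1$ components with pairwise distinct scales — in other words, those diagonalisable over $\mathbb{Z}_p$ with entries equal to distinct powers of $p$.

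The main obstacle is the local spinor-norm computation in step three. It requires a case analysis of reflections within and across Jordan components, and the prime $p=2$ needs separate and rather delicate treatment because the standard Jordan splitting is not as clean (one must use the modified Jordan form with binary forms of type $\mathbb{H}$ and $\mathbb{A}$, and check that those already contribute all of $\mathbb{Z}_2^{\times}$ to the spinor norm). Once this local input is in hand, the deduction from the global double-coset description to the statement of the theorem is immediate.
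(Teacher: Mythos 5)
The paper does not prove this statement at all: it is quoted verbatim from Kneser's 1956 paper \cite{kne} and used only through Corollary~1, so there is no in-paper argument to compare against. Your architecture --- strong approximation for $\mathrm{Spin}$ to get one proper class per spinor genus, the idelic double-coset count of spinor genera, the observation that indefiniteness makes the archimedean spinor norm all of $\mathbb{R}^{\times}$ and that $\mathbb{Q}$ has class number one, hence reduction to whether $\theta(\mathrm{O}^+(L_p))\supseteq\mathbb{Z}_p^{\times}$ --- is the standard and correct route (it is essentially Kneser's own argument in modern adelic dress), and you correctly flag that rank $\geq 3$ is needed even though the paper's statement omits it.

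The genuine gap is that step three, which is the entire content of the theorem, is asserted rather than carried out, and the assertion is not correct in the form you give it. Your case (ii) is vacuous (Jordan constituents have pairwise distinct scales by definition), so everything rests on case (i); and for $p=2$ the claim that a rank $\geq 2$ constituent yields two symmetries whose norms exhaust $\mathbb{Z}_2^{\times}/(\mathbb{Z}_2^{\times})^2$ fails: the odd unimodular lattice $\langle 1,1\rangle$ over $\mathbb{Z}_2$ represents only units congruent to $1 \bmod 4$, so reflections inside that single constituent give only the index-two subgroup $\{1,5\}$ of the unit square classes. To recover $3$ and $7$ one must use vectors straddling different Jordan constituents (or vectors of norm $2u$), and whether this succeeds depends on the scales and parities of the neighbouring constituents --- which is exactly why the dyadic exceptional condition in Kneser's theorem is delicate and why \cite{kne} (or O'Meara 93:20, or Cassels Ch.~11) devotes most of its effort to it. As written, your proposal would let through 2-adic lattices such as $\langle 1,1\rangle\perp\langle 2^{c}u\rangle$ with $c$ large, where the within-constituent reflections you invoke do not generate $\mathbb{Z}_2^{\times}$; one has to argue separately, using indefiniteness and the full rank-$3$ configuration, that the spinor norm is nonetheless large enough. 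So the proposal is a correct roadmap, but the dyadic local computation must actually be done before it is a proof.
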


\begin{corollary}
Suppose $\mathrm{rank}(L)\geq 3$ and $\mathrm{det}(L)$ is square free. Then the genus of $L$ coincides with the class of $L$. In particular, $g_{sp}=1$.
\end{corollary}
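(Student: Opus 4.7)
The plan is to argue by contraposition using Kneser's theorem. I would assume that the genus of $L$ contains more than one class and aim to deduce that $\det(L)$ must be divisible by a square.

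First, I would apply Kneser's theorem to obtain a prime $p$ such that the quadratic form on $L$ admits a diagonalisation over $\mathbb{Z}_p$ of the shape $\mathrm{diag}(u_1 p^{a_1}, \ldots, u_\rho p^{a_\rho})$ with $u_i \in \mathbb{Z}_p^\times$ and with the $a_i$ pairwise distinct non-negative integers. Computing the determinant of this diagonal form gives $v_p(\det L) = a_1 + \cdots + a_\rho$, since the $u_i$ are $p$-adic units.

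Next I would use the assumption $\rho \geq 3$: any three distinct non-negative integers satisfy $a_1+a_2+a_3 \geq 0+1+2 = 3$, so $v_p(\det L) \geq 3$, and in particular $p^2 \mid \det(L)$. This contradicts the hypothesis that $\det(L)$ is square free. Hence the genus of $L$ must consist of a single class.

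For the statement on spinor genera, I would simply recall that within a genus the spinor genera partition the set of classes, so if the genus contains only one class then there is only one spinor genus, giving $g_{sp}=1$. There is essentially no obstacle here: the entire argument is a short pigeonhole on $p$-adic valuations once Kneser's theorem is invoked, and the only thing to be careful about is the elementary bound on sums of distinct non-negative integers.
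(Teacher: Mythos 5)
Your argument is correct and is exactly the (implicit) argument the paper intends: the corollary is stated as an immediate consequence of Kneser's theorem, and your contrapositive via the bound $a_1+a_2+a_3\geq 0+1+2=3$ on distinct non-negative $p$-adic valuations, together with the chain class $\subseteq$ spinor genus $\subseteq$ genus, is the standard way to fill it in. No issues.
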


\section*{Computation of densities}

In \cite{kit} we can find the following theorems.

Let $L=\bigoplus_{j\in \mathbb{Z}} L_j$, $L_j$ is a $p^j$-modular
 lattice of rank $n_j\in \mathbb{Z}_{\geq 0}$. Let $$\omega:=\sum \limits_{j} j n_j ((n_j+1)/2+\sum
 \limits_{k>j}
 n_k)\ ,\ P_p(n):=\prod \limits_{i=1}^{n} (1-p^{-2i})$$
$$ \chi(W) := \begin{cases}
0,&\text{if dim W is dd}\\
1,&\text{if W is a hyperbolic space}\\
-1,&\text{otherwise}
\end{cases}$$
\begin{theorem}
Let $p\neq 2$, then
 $$ \alpha_p(L)=2^{s-1}p^\omega P_p(L)E_p(L)$$
where $s$ is the number of non-zero $p^j$-modular term of $L_j$ in
Jordan decomposition of $L$,
$$ P_p(L)=\prod \limits_{j} P_p([n_j/2]),\ E_p(L)=\prod \limits_{j,L_j\neq 0}
(1+\chi(N_j)p^{-n_j/2})^{-1},$$ $N_j$ - unimodular lattices corresponding to $L_j$
(it means that $L_j=N_j(p^j)$).
\end{theorem}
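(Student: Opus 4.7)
The plan is to compute $\alpha_p(L)$ directly from its definition by exploiting the block structure imposed by the Jordan decomposition $L=\bigoplus_j L_j$ with $L_j=N_j(p^j)$. First I would fix a $\mathbb{Z}_p$-basis compatible with this decomposition so that the Gram matrix $S$ is block diagonal with diagonal blocks $p^j S_j$, where $S_j$ is the Gram matrix of the unimodular lattice $N_j$. Any competitor $X\in\mathrm{Mat}_{n\times n}(\mathbb{Z}_p)$ is then written in matching block form $X=(X_{jk})$ with $X_{jk}\colon L_k\to L_j$, and the congruence $X^tSX\equiv S\pmod{p^r}$ translates block by block into conditions whose $p$-adic valuations are controlled by the indices $j,k$.

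The key observation is that, for $r$ large, the congruence forces a triangularity condition modulo appropriate powers of $p$: the off-diagonal block $X_{jk}$ with $k>j$ must be divisible by $p^{k-j}$ (this uses $p\neq 2$ so that $2$ is invertible and the symmetric pairing behaves well), while each diagonal block $X_{jj}$ is an isometry of $N_j$ modulo $p^{r-j}$. Thus the total count splits, up to a controlled overlap, into a product over diagonal blocks times a contribution from the off-diagonal blocks.

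Next I would evaluate the two contributions separately. For each diagonal block, the count reduces, via Hensel's lemma and the smoothness of the orthogonal scheme at odd $p$, to counting $\mathbb{F}_p$-points of the orthogonal group of $N_j$. The classical order formula for $\mathrm{O}_{n_j}$ over $\mathbb{F}_p$, distinguishing the hyperbolic, non-hyperbolic, and odd-dimensional cases via $\chi(N_j)$, yields the factor
$$2\,p^{n_j(n_j-1)/2}\,P_p([n_j/2])\,(1+\chi(N_j)p^{-n_j/2})^{-1}$$
per non-zero block. Product over the $s$ non-zero blocks and combination with the $\tfrac{1}{2}$ in the definition of $\alpha_p$ produces the $2^{s-1}$, the $P_p(L)$ and the $E_p(L)$. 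The off-diagonal contribution, after the $p^{-rn(n-1)/2}$ normalisation and the $r\to\infty$ limit, collapses to a single power of $p$, which must be identified with $p^\omega$.

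The main obstacle is precisely this last combinatorial bookkeeping: one has to add up, over ordered pairs $j<k$, the valuation constraints on $X_{jk}$ together with the diagonal contributions $p^{j\cdot n_j(n_j+1)/2}$ coming from the scaling of the Gram form on each $L_j$, and verify that the bookkeeping collapses exactly to $\omega=\sum_j jn_j((n_j+1)/2+\sum_{k>j}n_k)$. The symmetry of $S$ means that $X_{jk}$ and $X_{kj}$ are not independent, so one must be careful not to double-count; keeping track of this carefully is the technical heart of the argument, while the rest reduces to standard finite-group order computations.
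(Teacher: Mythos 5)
A preliminary remark: the paper does not prove this theorem at all --- it is quoted from Kitaoka's book (\cite{kit}), so there is no in-paper argument to compare your proposal against. Judged on its own terms, your outline follows the standard route (Siegel, Kitaoka): block-diagonalise $S$ according to the Jordan splitting, show that the congruence forces near-triangularity of $X$, reduce each diagonal block via Hensel's lemma to the order of an orthogonal group over $\mathbb{F}_p$, and collect the powers of $p$. Your per-block factor $2\,p^{n_j(n_j-1)/2}P_p([n_j/2])(1+\chi(N_j)p^{-n_j/2})^{-1}$ is indeed the order of $\mathrm{O}(N_j \bmod p)$ divided by $p^{\dim}$ in all three cases (odd rank, hyperbolic, non-hyperbolic), and the $2^{s-1}$ does arise from the $s$ determinant factors set against the global $\tfrac{1}{2}$ in the definition of $\alpha_p$.

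Nevertheless there is a genuine gap, and you point at it yourself: the exponent of $p$. You assert that the off-diagonal freedom ``collapses to a single power of $p$, which must be identified with $p^\omega$,'' but that identification is the actual content of the theorem and is never carried out. The factorisation you rely on is also not as clean as stated: the $(j,j)$ block of $X^tSX\equiv S\ (\mathrm{mod}\ p^r)$ reads $X_{jj}^tS_jX_{jj}\equiv S_j-\sum_{i\neq j}p^{i-j}X_{ij}^tS_iX_{ij}\ (\mathrm{mod}\ p^{r-j})$, so $X_{jj}$ is an isometry of $N_j$ only modulo a power of $p$ governed by the neighbouring blocks, not modulo $p^{r-j}$ independently; the conditions on the various blocks are coupled, and the ``controlled overlap'' has to be made precise (for instance by induction, splitting off one Jordan constituent at a time, which is how Kitaoka organises the computation) before the count splits into a product. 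Until the valuation bookkeeping is actually performed and shown to equal $\omega=\sum_j jn_j((n_j+1)/2+\sum_{k>j}n_k)$ --- it would be worth at least checking it against the case $n_0=2$, $n_1=1$, $\omega=1$ that the paper uses later --- what you have is an accurate plan rather than a proof.
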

Each unimodular lattice over $\mathbb{Z}_2$ is an orthogonal sum $N=N^{\mathrm{even}}\oplus N^{\mathrm{odd}}$ of even and odd sublattices. Here $\mathrm{rank}(N^{\mathrm {odd}})\leq 2$. $N^{ \mathrm{even}}$ is a sum of several hyperbolic planes and, perhaps, one lattice $\begin{pmatrix} 2 & 1 \\ 1 & 2 \end{pmatrix}$.
\begin{theorem}
Let $q=\sum \limits_{j} q_j$,
$$ q_j = \begin{cases}
0,&\text{if $N_j$ is even}\\
n_j,&\text{if $N_j$ is odd and $N_{j+1}$ is even}\\
n_j+1,&\text{if $N_j$ and $N_{j+1}$ are odd}
\end{cases}$$
$$ P_2(L)=\prod \limits_{j} P_2(\mathrm{rank}(N_j^{\mathrm{even}}/2)), \ E_2(L)=\prod \limits_{j}
E_j^{-1}$$ where
$$ E_j=\frac{1}{2}(1+\chi (N_j^{\mathrm{even}})2^{-\mathrm{rank}(N_j^{\mathrm{even}})/2})$$
if both $N_{j-1}$ and $N_{j+1}$ are even, except the case when
$N_j^{\mathrm{odd}}\cong\langle \varepsilon_1 \rangle \oplus \langle
\varepsilon_2 \rangle, \ \varepsilon_1 \equiv \varepsilon_2 \ mod
\ 4$. In all other cases let $E_j=1/2$. 
Then the local density $\alpha_2(L)$ is equal to:
$$\alpha_2(L)=2^{n-1+\omega-q}P_2(L)E_2(L)$$
\end{theorem}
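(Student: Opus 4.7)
The plan is to extend the odd-prime argument (Theorem preceding this one) to $p=2$, keeping careful track of the additional $2$-adic phenomena introduced by the even/odd decomposition of unimodular lattices. I would start from the Jordan decomposition $L=\bigoplus_j L_j$ and parametrise each matrix $X\in\mathrm{Mat}_{n\times n}(\mathbb{Z}_2)$ in block form $(X_{ij})$ compatible with this decomposition. The congruence $X^{t}SX\equiv S\pmod{2^r}$ then decouples, at top order, into conditions on the diagonal blocks (self-isometries of each modular piece) and on the off-diagonal blocks. A valuation bookkeeping argument, identical to the one used for odd primes, should show that the off-diagonal contributions generate precisely the factor $2^{\omega}$ once one passes to the limit $r\to\infty$.

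For the diagonal blocks I would apply a $2$-adic Hensel / Witt-type lifting lemma to reduce to counting automorphisms of each unimodular lattice $N_j$ over $\mathbb{Z}_2$. Using the orthogonal decomposition $N_j=N_j^{\mathrm{even}}\oplus N_j^{\mathrm{odd}}$, the automorphism count of the even part of rank $2m$ contributes a factor of shape $P_2(m)\,(1+\chi(N_j^{\mathrm{even}})\,2^{-m})^{-1}$, which assembles to the stated $P_2(L)$ and the generic value of $E_j$. The surviving $2^{n-1}$ prefactor reflects the sign freedom on the odd rank-$\le 2$ summands, exactly as in the odd prime case.

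The hard part will be the interaction between consecutive Jordan blocks when $N_j^{\mathrm{odd}}\neq 0$. The key phenomenon is that for a $2^j$-modular odd summand the diagonal entry must be reproduced modulo $2^{j+2}$ rather than only modulo $2^{j+1}$, so residues of $\varepsilon_i$ modulo $4$ enter the count and impose extra congruence constraints that couple neighbouring blocks. Enumerating the cases (each of $N_{j-1}$, $N_{j+1}$ even or odd) should produce the piecewise formula for $q_j$ and hence the $2^{-q}$ correction, while the exceptional configuration $N_j^{\mathrm{odd}}\cong\langle\varepsilon_1\rangle\oplus\langle\varepsilon_2\rangle$ with $\varepsilon_1\equiv\varepsilon_2\pmod 4$ furnishes an additional swap automorphism that forces $E_j=1/2$ instead of the generic value. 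Verifying that all these powers of $2$ combine to $2^{n-1+\omega-q}P_2(L)E_2(L)$ is the bulk of the bookkeeping, and isolating the exceptional case from the generic Witt-lifting argument is where I expect the main difficulty.
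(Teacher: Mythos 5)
This theorem is not proved in the paper at all: it is quoted verbatim from Kitaoka \cite{kit} (the section opens with ``In \cite{kit} we can find the following theorems''), so there is no internal argument to compare your proposal against. Taken on its own terms, your plan correctly identifies the standard strategy behind Kitaoka's computation --- block decomposition of the congruence $X^tSX\equiv S \pmod{2^r}$ along the Jordan splitting, reduction of the diagonal blocks to isometry counts of unimodular $\mathbb{Z}_2$-lattices via a Hensel-type lifting, and a separate analysis of the odd rank-$\le 2$ summands where residues mod $4$ matter. That is the right skeleton.

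However, as a proof it has genuine gaps beyond deferred bookkeeping. First, the claim that the off-diagonal blocks contribute ``precisely the factor $2^{\omega}$'' is not right as stated: $\omega=\sum_j jn_j((n_j+1)/2+\sum_{k>j}n_k)$ carries the scale indices $j$ and arises from comparing the congruence at the different $2$-adic scales of the Jordan constituents (essentially a determinant/index normalisation), not from a count of free off-diagonal entries alone; without that normalisation your valuation bookkeeping will not close. Second, the prefactor $2^{n-1}$ at $p=2$ is of a different nature from the $2^{s-1}$ appearing in the odd-prime theorem ($s$ = number of constituents, tied to spinor norms), and attributing it to ``sign freedom on the odd summands'' is an assertion, not a derivation; this is precisely the place where the $p=2$ case diverges from the odd-prime template you propose to imitate. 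Third, the derivation of the correction $2^{-q}$, including why $q_j$ jumps by $1$ exactly when both $N_j$ and $N_{j+1}$ are odd, and why the exceptional configuration $\langle\varepsilon_1\rangle\oplus\langle\varepsilon_2\rangle$ with $\varepsilon_1\equiv\varepsilon_2\pmod 4$ changes $E_j$, is the entire content of the theorem and is left unverified. In short: the roadmap is sound, but every quantitative claim that distinguishes this formula from the odd-prime one remains to be established; citing \cite{kit}, as the paper does, is the honest alternative.
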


\section*{Computation of volume of indefinite ternary lattice with square free discriminant}
\begin{definition} (Watson moves $E_p(L),\ F_p(L)$)\\
$E_p(L)=L+p^{-1}L\cap pL^* \subset L\otimes \mathbb{Q}$, where $L^*$ is the dual lattice of $L$. \\
$F_p(L)=E_p(\sqrt{p}L)$.
\end{definition}
For the detailed description of Watson moves we refer to \cite{vin}. Notice, that the scalar products of vectors in $E_p(L)$ are integer, so $E_p(L)$ -- quadratic lattice, and $\mathrm{O}(L)\subset \mathrm{O}(E_p(L))$.
\begin{proposition}
Each indefinite ternary lattice embeds into a maximal lattice with square free discriminant.
\end{proposition}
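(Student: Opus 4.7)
The plan is to work one prime at a time: for each $p$ with $p^2 \mid \det L$, I use the Watson moves $E_p$ and $F_p$ to reduce the $p$-valuation of the discriminant until it lies in $\{0,1\}$. Fix such $p$ and consider the Jordan decomposition $L_p = \bigoplus_j L_{p,j}$ over $\mathbb{Z}_p$, with $L_{p,j}$ a $p^j$-modular lattice of rank $n_j$; since $L$ is ternary, $\sum_j n_j = 3$. First I apply $E_p$ to the components of scale $j \geq 2$. Computing $p^{-1}L_p \cap pL_p^*$ summand-by-summand and using $L_{p,j}^* = p^{-j}L_{p,j}$, one sees that $E_p(L)$ is an overlattice of $L$ in which each $L_{p,j}$ with $j \geq 2$ is replaced by the $p^{j-2}$-modular lattice $p^{-1}L_{p,j}$, while $L_{p,0}$ and $L_{p,1}$ are unchanged. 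Iterating $E_p$ I reach, after finitely many steps, a lattice whose Jordan scales at $p$ all lie in $\{0,1\}$.

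The possible ternary Jordan types at this stage are $(0,0,0)$, $(0,0,1)$, $(0,1,1)$, and $(1,1,1)$; the first two already have square-free $p$-part of discriminant. For type $(1,1,1)$ the whole $L_p$ is $p^1$-modular of rank $3$. Dividing its form by $p$, the reduction $L_p/pL_p$ carries a non-degenerate ternary form over $\mathbb{F}_p$, and any such form is isotropic by Chevalley--Warning. Lifting an isotropic vector to $v \in L_p \setminus pL_p$ with $(v,v) \in p^2\mathbb{Z}_p$, I check that $L_p + \mathbb{Z}_p \cdot (v/p)$ is an integral overlattice of index $p$ and Jordan type $(0,0,1)$. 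For type $(0,1,1)$ I apply $F_p(L) = E_p(\sqrt{p}L)$: on $\sqrt{p}L$ the Jordan scales become $\{1,2\}$, so $E_p$ divides the scale-$2$ part by $p$, and a parallel summand-by-summand computation shows that $F_p$ interchanges the ranks of the unimodular and $p^1$-modular Jordan parts, sending Jordan type $(0,1,1)$ to $(0,0,1)$.

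The main obstacle is that $F_p$ involves the rescaling by $\sqrt p$, so for type $(0,1,1)$ the inclusion $L \hookrightarrow F_p(L)$ must be interpreted up to this overall similarity, which nevertheless preserves both square-freeness of the discriminant and maximality. The case $p=2$ requires additional bookkeeping of the even/odd unimodular summands appearing in Theorem 3 and in the parameter $q$ defined there, but the same three-step strategy applies. Once every bad prime has been processed, the resulting lattice has Jordan type $(0,0,0)$ or $(0,0,1)$ at each prime, so its discriminant is square-free; and since the $p^1$-modular Jordan component then has rank $\leq 1$ and is thus anisotropic at every $p$, the lattice is locally maximal at every prime and hence globally maximal.
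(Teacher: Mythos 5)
Your argument is correct and follows essentially the same route as the paper: iterated Watson moves $E_p$ to collapse all Jordan scales at each bad prime into $\{0,1\}$, followed by $F_p$ (with the same caveat about the $\sqrt{p}$-rescaling that the paper handles by noting $\mathrm{O}(L)=\mathrm{O}(F_p(L))$) when the $p$-modular part still has rank $\geq 2$. The only deviation is that in the totally $p$-modular case $(1,1,1)$ you replace $F_p$ by an explicit integral overlattice built from an isotropic vector mod $p$, which has the small advantage of giving an honest embedding rather than an inclusion up to similarity; that step is sound (also at $p=2$, where Chevalley--Warning still produces the required vector even though the associated bilinear form is degenerate), so the variation is harmless.
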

\begin{proof}
Let $L=L_0\oplus[p]L_1\oplus[p^2]L_2\oplus\ldots$ be the Jordan decomposition of $L$.
$(E_p(L))_q=L_q,\ q\neq p$, $(E_p(L))_p=L_0\oplus [p]L_1 \oplus (L_2\oplus [p]L_3\oplus \ldots)$. So, in each $p$-th component starting from $p^2$ the degree of the corresponding invariant factor decreases by $2$. Now let us apply sufficient number of operations $E_p$ for every $p$ dividing the discriminant of $L$. We obtain that the lattice $L$ is embedded into $L'$, and all invariant factors of $L'$ are square free. If the discriminant of $L'$ is not square free yet, there exists $p$ such that $\mathrm {rank}(L_1)\geq 2$.
Now let us apply $F_p$ to $L'$.  $(F_p(L))_q=L_q,\ q\neq p$. $(F_p(L))_p=L_1\oplus [p]L_0$. $F_p(F_p(L))=L$, so $\mathrm{O}(L)=\mathrm{O}(F_p(L))$. So, after applying sufficient number of $F_p$, we get the desired lattice.
\end{proof}

\begin{theorem}
Let $L$ be an indefinite ternary lattice with square-free discriminant $d$. The normalised volume of $L$ is equal to
$$ V=\begin{cases}
  \frac{1}{3\cdot 2^{\omega(d)+2}} \prod \limits_{p | d, p\neq 2} (p \pm 1 ) & \text{for even lattices, }2|d\\
   \frac{1}{ 2^{\omega(d)+3}} \prod \limits_{p | d, p\neq 2} (p \pm 1 )  & \text{for odd lattices, }2|d \\
   \frac{2\pm 1}{3\cdot 2^{\omega(d)+4}} \prod\limits_{p|d} (p\pm 1)& \text{for } 2\nmid d
 \end{cases} $$
where $\omega(d)$ is a number of prime divisors of $d$. The sign plus for $p\neq 2$ is chosen iff the corresponding unimodular component represents $0$ over $\mathbb{Z}_p$. The sign plus in the last formula for $p=2$ is chosen iff $d=-1 \mod 4,\ \varepsilon_2(L)=1$ or $d=1 \mod 4, \ \varepsilon_2(L)=-1$ ( $\varepsilon_2(L)$ -- Hasse invariant over $\mathbb{Q}_2$).
\end{theorem}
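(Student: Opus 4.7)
The plan is to specialise Siegel's formula to $\rho=3$, noting that by the Corollary the square-freeness of $d$ forces $g_{sp}=1$. The archimedean Gamma product collapses to
\[
\prod_{k=1}^{3}\pi^{-k/2}\Gamma(k/2)=\frac{1}{2\pi^{2}},
\]
using $\Gamma(1/2)=\sqrt{\pi}$, $\Gamma(1)=1$, $\Gamma(3/2)=\sqrt{\pi}/2$, so the formula reduces to $V=|d|^{2}\pi^{-2}\prod_{p}\alpha_{p}(L)^{-1}$. The remaining $\pi^{-2}$ will be cancelled by the factor $\zeta(2)=\pi^{2}/6$ hidden in the Euler product $\prod_{p}(1-p^{-2})^{-1}$ over the unramified primes; this is what produces the universal $1/3$ visible in every case of the theorem.

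For $p$ odd with $p\nmid d$, the lattice $L\otimes\mathbb{Z}_{p}$ is unimodular of rank $3$, and Theorem~4 immediately yields $\alpha_{p}(L)=1-p^{-2}$ (since $\chi=0$ in odd dimension). For $p$ odd with $p\mid d$, square-freeness pins down the Jordan decomposition to $L_{0}\oplus[p]L_{1}$ with $\mathrm{rank}\,L_{0}=2$, $\mathrm{rank}\,L_{1}=1$, and the same theorem gives $\alpha_{p}(L)=2(p\mp 1)$, the sign being $-$ exactly when the rank-$2$ block is isotropic over $\mathbb{Z}_{p}$. Re-indexing through the identity $p^{2}(1-p^{-2})/(p\mp 1)=p\pm 1$ collapses the odd-prime part of $V$ into $\prod_{p\mid d,\,p\neq 2}(p\pm 1)$ divided by a power of $2$ counting the odd prime divisors of $d$; the unramified factor supplies the $\zeta(2)$ that cancels the $\pi^{-2}$, plus an extra $1-2^{-2}=3/4$ when $2\nmid d$.

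The delicate step is $p=2$, handled by Theorem~5 in three regimes. First, any even ternary lattice has even discriminant (its Gram matrix has even diagonal, so $\det\equiv 2\,a_{12}a_{13}a_{23}\pmod{2}$), which removes one case a priori. When $2\mid d$ and $L$ is even the Jordan splitting is $N_{0}\oplus[2]N_{1}$ with $N_{0}$ rank-$2$ even unimodular and $N_{1}=\langle\varepsilon\rangle$; direct evaluation of $q$, $P_{2}$, $E_{2}$ gives the first line of the statement. The odd-$L$ case with $2\mid d$ admits more parities of Jordan blocks, but a parallel computation yields the second line. When $2\nmid d$, the constraint $\mathrm{rank}\,N^{\mathrm{odd}}\leq 2$ together with the total rank $3$ being odd forces $N_{0}^{\mathrm{odd}}$ of rank $1$ and $N_{0}^{\mathrm{even}}$ of rank $2$, so $\alpha_{2}(L)^{-1}$ takes one of two values according to $\chi(N_{0}^{\mathrm{even}})=\pm 1$; combining with the residual $3/4$ gives the third line $(2\pm 1)/(3\cdot 2^{\omega(d)+4})$. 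The principal obstacle is to translate $\chi(N_{0}^{\mathrm{even}})$ in this last regime into the stated condition on $d\bmod 4$ and the Hasse invariant $\varepsilon_{2}(L)$; this is done by diagonalising $L\otimes\mathbb{Q}_{2}$ as $\langle a,b,c\rangle$, expanding $\varepsilon_{2}=\prod_{i<j}(a_{i},a_{j})_{2}$, and matching the resulting Hilbert symbols against the two possible rank-$2$ even unimodular $\mathbb{Z}_{2}$-lattices, the hyperbolic plane and $\bigl(\begin{smallmatrix}2&1\\1&2\end{smallmatrix}\bigr)$.
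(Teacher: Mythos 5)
Your proposal is correct and follows essentially the same route as the paper: Siegel's formula with $g_{sp}=1$ from the Corollary, the local densities $\alpha_p$ computed from the Jordan decompositions via Theorems~4 and~5, the Euler product $\prod_{p\nmid d}(1-p^{-2})$ supplying $\zeta(2)^{-1}$ to cancel the $\pi^{-2}$, and the $2$-adic sign resolved by comparing Hasse invariants (the paper does this by testing $\varepsilon_2(L)=\varepsilon_2(U\oplus\langle -d\rangle)$, which is the same computation as your Hilbert-symbol expansion). Your observation that an even ternary lattice necessarily has even discriminant, which explains why the $2\nmid d$ case has no even subcase, is a small but worthwhile addition that the paper leaves implicit.
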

\begin{proof}
By corollary 1 the genus of $L$ coincides with it's spinor genus and class, so $g_{sp}=1$
$$ V =  2 d^2 \prod\limits_{k=1}^3 \pi^{-k/2}\Gamma(k/2)\prod a_p^{-1} = \frac{d^2}{\pi^2}\prod \alpha_p^{-1}$$

We study the Jordan decomposition of $L$ over $\mathbb{Z}_p$.

For $p\nmid d$ the lattice is unimodular of rank $3$, so $\alpha_p=(1-p^{-2})$.

Consider the case $p|d$, $p\neq 2$. 
$L=L_0\oplus[p]L_1$, $\mathrm{rank}(L_0)=2,\ \mathrm{rank}(L_1)=1$
The local densities $\alpha_p$ are equal to 
$$ a_p=2p(1-p^{-2})(1+\chi (L_0) p^{-1})^{-1}$$

$\chi(A)=1$, if $A\sim \begin{pmatrix}
0 & 1\\
1 & 0
\end{pmatrix}$, $\chi(A)=-1$ otherwise.

$L_0\sim \begin{pmatrix}
0 & 1\\
1 & 0
\end{pmatrix}$ iff the quadratic form, corresponding to $L_0$, represents $0$. It happens iff $\mathrm{det}(L_0)=-1$.

So,
$\alpha_p=2p(1-p^{-2})(1 \pm p^{-1})^{-1}$, the sign plus is chosen when the quadratic form represents $0$, otherwise the minus sign is chosen. 

Consider the case of even $d$.

The Jordan decomposition of $L$ is 
$ \begin{pmatrix}
a & b &  \\
b & c &  \\
    &  &  2\epsilon
\end{pmatrix} $, where in the upper left corner there is an unimodular component of rank $2$, $\epsilon$ is a unit in $\mathbb{Z}_p$. 

If this unimodular component is odd, then $\mathrm{rank}(N_0^{\mathrm{even}})=\mathrm{rank}(N_1^{\mathrm{even}})=0$, $E_{-1}=E_0=E_1=E_2=\frac{1}{2}$, so $\alpha_2=2^{3-1+1-4}2^4=2^3$.

If this unimodular component is even, then $E_0=E_2=\frac{1}{2}$, $E_1=1$, $\mathrm{rank}(N_0^{\mathrm{even}})=2$, $\mathrm{rank}(N_1^{\mathrm{even}})=0$, so 
$\alpha_2=2^{3-1+1-1}(1-2^{-2})2^2=2^4(1-2^{-2})$.

Combining this, we get

$$ Vol(\mathrm{O}(L)) = \frac{d^2}{\pi^2} \prod \limits_{p\nmid d}(1-p^{-2})\prod\limits_{p|d, p\neq 2} \frac{1}{2p}(1-p^{-2})^{-1}(1\pm p^{-1}) \begin{cases}
   2^{-4}(1-2^{-2})^{-1} & \text{for even lattices}\\
   2^{-3} & \text{for odd lattices}
 \end{cases} 
$$
$$ Vol(\mathrm{O}(L))=\begin{cases}
  \frac{1}{3\cdot 2^{\omega(d)+2}} \prod \limits_{p | d, p\neq 2} (p \pm 1 ) & \text{for even lattices}\\
   \frac{1}{ 2^{\omega(d)+3}} \prod \limits_{p | d, p\neq 2} (p \pm 1 )  & \text{for odd lattices}
 \end{cases} $$
where $\omega(d)$ is the number of prime divisors of $d$. 

Consider the case of odd $d$.

All densities except $\alpha_2$ are the same as in the previous case.

The lattice is unimodular, $\mathrm{rank}(N_0^{\mathrm{even}})=2$, $E_{-1}=E_1=\frac{1}{2}$, $E_0=\frac{1}{2}(1\pm 2^{-1})$, the sign plus is chosen iff $N_0^{\mathrm{even}}$ is a hyperbolic plane.
$N_0^{\mathrm{even}}$ is a hyperbolic plane iff $\varepsilon_2(L)=\varepsilon_2(U\oplus <-d>)$ (since the determinants of the left and the right hand sides coincide and are equal to $d$). $\varepsilon_2(U\oplus <-d>)=(-1)^{\epsilon(-d)}$, 
$\epsilon(-d)=
\begin{cases} 
0 \mbox { for } d=-1 \mod 4 \\
1 \mbox { for  } d=1 \mod 4
\end{cases}$.

So the sign plus is chosen when
$d=-1 \mod 4,\ \varepsilon_2(L)=1$ and $d=1 \mod 4, \ \varepsilon_2(L)=-1$. In all other cases the minus sign is chosen.

$\alpha_2=2^{3-1+0-3}(1-2^{-2})\cdot 2^3(1\pm 2^{-1})^{-1} = 2^2(1-2^{-2})(1\pm 2^{-1})^{-1}$.

Combining this, we obtain 
$$Vol(\mathrm{O}(L)) = \frac{d^2}{\pi^2} \prod \limits_{p\nmid d}(1-p^{-2})\prod\limits_{p|d} \frac{1}{2p}(1-p^{-2})^{-1}(1\pm p^{-1})  2^{-2}(1-2^{-2})^{-2}(1\pm 2^{-1})$$
$$Vol(\mathrm{O}(L))=\frac{2\pm 1}{3\cdot 2^{\omega(d)+4}} \prod\limits_{p|d} (p\pm 1)$$
\end{proof}

\section*{Examples}

In this section we compute the volumes of some specific lattices. Fundamental domains for them are known, so we can compare the results obtained by our computation with the areas of the fundamental domains and make sure they coincide. We note that in case of signature $(2,1)$ the volume of $X^V$ is equal to $4\pi$. For the general case we refer to \cite{gri}.

\subsection*{L=$\langle 2 \rangle \oplus U$}
The lattice is even, the discriminant is even, $\omega(d)=1$. Using theorem 5, we get $\mathrm{ Vol_{HM}}(L)=\frac{1}{3\cdot 2^3}$. On the other hand, the fundamental domain for the orthogonal group of this lattice is a triangle with angles $\frac{\pi}{\infty}, \frac{\pi}{3}, \frac{\pi}{2}$. So, the area of this triangle is $\frac{\pi}{6}$. Hence the normalised volume is $\frac{\pi}{6} \cdot \frac{1}{4\pi}=\frac{1}{24}$ and that is exactly what we need.

\subsection*{L=$\langle 6 \rangle \oplus U$}
The lattice is even, the discriminant is even, $\omega(d)=2$. Using theorem 5, we get $\mathrm{ Vol_{HM}}(L)=\frac{1}{3\cdot 2^4} (3+1)$. We notice that the unimodular component is equal to the hyperbolic plane over $\mathbb{Z}_3$, so it represents $0$. That's why we choose the plus sign. On the other hand, the fundamental domain for the orthogonal group of this lattice is a triangle with angles $\frac{\pi}{\infty}, \frac{\pi}{6}, \frac{\pi}{2}$. So, the area of this triangle is $\frac{\pi}{3}$. Hence the normalised volume is $\frac{\pi}{3}\cdot \frac{1}{4\pi} =\frac{1}{12}$ as well.

\subsection*{L=$\langle 4 \rangle \oplus U$} 
We notice that the discriminant of this lattice is not square free. It means that this lattice embeds into a maximal lattice $L'=\langle 1 \rangle \oplus U$. It is unimodular and odd. Using theorem 5, we get $\mathrm{Vol_{HM}}(L')=\frac{2+1}{3\cdot 2^4}$. $N_0^{\mathrm{even}}=U$, that's why we choose the plus sign. The fundamental domain of $O(L)$ is a triangle with angles $\frac{\pi}{\infty}, \frac{\pi}{4}, \frac{\pi}{2}$. It's area is $\frac{\pi}{4}$. Hence the normalised volume of $L$ is $\frac{\pi}{4}\cdot \frac{1}{4\pi}=\frac{1}{16}$. It coincides with the normalised volume of $L$, so the genus of $L$ coincides with it's class. 

\subsection*{L=$6x^2-y^2-z^2$}
The discriminant $d=6$, the lattice is odd, $\omega(d)=2$. Using theorem 5, we get $\mathrm{Vol_{HM}}(L)=\frac{3-1}{2^5}$. 
$-y^2-z^2$ doesn't represent $0$ over $\mathbb{Z}_3$, because it's discriminant is equal to $1$. 
The discriminant of $U$ equals $-1$, $\left( \frac{-1}{3} \right) =-1$, so it can't be equivalent to a hyperbolic plane. That's why we choose minus sign. The fundamental domain is a quadrangle with angles $\frac{\pi}{2},\ \frac{\pi}{2},\ \frac{\pi}{2},\ \frac{\pi}{4}$. It's area is $\frac{\pi}{4}$. Hence the normalised volume is $\frac{\pi}{4} \cdot \frac{1}{4\pi}= \frac{1}{16}$. That is exactly what we need.

\subsection*{L=$11x^2-y^2-z^2$}
The discriminant $d=11$, the lattice is odd, $\omega(d)=1$. We get $\mathrm{Vol_{HM}}(L)=\frac{(2-1)(11-1)}{3\cdot 2^5}$. $-y^2-z^2$ doesn't represent $0$ over $\mathbb{Z}_{11}$, because it's discriminant is equal to $1$. The discriminant of $U$ is $-1$, $\left( \frac{-1}{11} \right) =-1$. That means that $U$ is not equivalent to $-y^2-z^2$, so we choose the minus sign for $11$. We notice that $d=11=-1 \ \mathrm{mod} \ 4 $, $\epsilon_2(L)=(-1,\ -1)_2 = -1$, so we choose the minus sign for $2$ as well. The fundamental domain is a quadrangle with angles $\frac{\pi}{2},\ \frac{\pi}{2},\ \frac{\pi}{3},\ \frac{\pi}{4}$. It's area is $\frac{5\pi}{12}$. Hence the normalised volume is $\frac{5\pi}{12}\cdot \frac{1}{4\pi}=\frac{5}{48}$ as well.

\subsection*{L=$3x^2-5y^2-z^2$}
The lattice is odd, $d=15$, $\omega(d)=2$. Using theorem 5, we get $\mathrm{Vol_{HM}}(L)=\frac{2-1}{3\cdot 2^6}(3+1)(5-1)$. The determinant of the unimodular component is equal to $-3$ over $\mathbb{Z}_5$, $\left( \frac{-3}{5} \right) =-1$, $\mathrm{det}(U)=-1,\ \left( \frac{-1}{5} \right) =1$, so they are not equivalent. That's why we choose the minus sign for $5$. The determinant of the unimodular component is equal to $5$ over $\mathbb{Z}_3$, $\left( \frac{5}{3} \right) =\left( \frac{3}{5} \right) = -1$, $\mathrm{det}(U)=-1,\ \left( \frac{-1}{3} \right) =-1$, so they are equivalent. That's why we choose the plus sign for $3$. The fundamental domain is a quadrangle with angles $\frac{\pi}{2},\ \frac{\pi}{2},\ \frac{\pi}{2},\ \frac{\pi}{6}$, so it's area is $\frac{\pi}{3}$.  Hence the normalised volume is  $\frac{\pi}{3}\cdot \frac{1}{4\pi}=\frac{1}{12}$ as required.

\subsection*{L=$\begin{pmatrix}
\ 2 & -2 & \ 0 \\
-2 & \ 2 & -1 \\
\ 0 & -1 & \ 2 \\
\end{pmatrix}$}

The lattice is even, $d=-2$, $\omega(d)=1$. Using theorem 5, we get $\mathrm{Vol_{HM}}(L)=\frac{1}{3\cdot 2^3}$. Then not normalised volume is $\frac{\pi}{6}$. The fundamental domain is a triangle with angles $\frac{\pi}{2},\ \frac{\pi}{3},\ \frac{\pi}{\infty}$. It's area is $\frac{\pi}{6}$. Hence the normalised volume is $\frac{\pi}{6}\cdot \frac{1}{4\pi}=\frac{1}{24}$ as we need.

\section{Acknowledgements}
Im am very grateful to O.V. Schwarzman for his great ideas, support and inspiration.

\end{document}